\documentclass{article}
\pdfoutput=1
\usepackage[T1]{fontenc}
\usepackage[cp1250]{inputenc}
\usepackage{amsthm}
\usepackage{amsmath}
\usepackage{amsfonts}
\usepackage{multicol}
\usepackage{epsfig}
\usepackage{verbatim}
\usepackage[affil-it]{authblk}
\usepackage[authoryear]{natbib}
\usepackage{amssymb}
\usepackage{pdfpages}
\usepackage{graphicx}
\usepackage{subfig}

\usepackage{hyperref}
\hypersetup{%  http://www.tug.org/applications/hyperref/
	bookmarksnumbered,
	%pdfstartview={FitH},
	%linkcolor=black, %A
	%citecolor=black, %B
	%colorlinks=true, %C... A,B,C set black hyperlinks
	%pdfpagemode={None},
	%plainpages=false
}%

\newtheorem{theorem}{Theorem}
\newtheorem{lemma}{Lemma}
\newtheorem{corollary}{Corollary}
\theoremstyle{definition}
\newtheorem{example}{Example}

\theoremstyle{plain}

\newcommand\fX{\mathfrak{X}}
\newcommand\R{\mathbb R}
\def\cE{\mathcal E}

\newcommand\cM{\mathcal M}

\newcommand\cS{\mathfrak S}
\newcommand\T{\top}

\newcommand\bc{\mathbf c}
\newcommand\f{\mathbf f}
\newcommand\bu{\mathbf u}
\newcommand\bv{\mathbf v}
\newcommand\x{x}
\newcommand\y{y}
\newcommand\z{\mathbf z}

\newcommand\A{\mathbf A}

\newcommand\E{\mathbf E}
\newcommand\bH{\mathbf H}
\newcommand\I{\mathbf I}
\newcommand\K{\mathbf K}
\newcommand\M{\mathbf M}

\newcommand\Z{\mathbf Z}

\newcommand\0{\mathbf 0}

\newcommand\tr{\mathrm{tr}}

\setlength{\hoffset}{-1.0cm}
\addtolength{\textwidth}{60pt}
\linespread{1.3}

\title{Removal of the points that do not support an E-optimal experimental design}
\author{Radoslav Harman, Samuel Rosa}
\affil{Faculty of Mathematics, Physics and Informatics, Comenius University in Bratislava, Slovakia}
\date{\today} %e.g. April 23, 2015}

\begin{document}
	
\maketitle

\begin{abstract}
	We propose a method of removal of design points that cannot support any E-optimal experimental design of a linear regression model with uncorrelated observations. The proposed method can be used to reduce the size of some large E-optimal design problems such that they can be efficiently solved by semidefinite programming. This paper complements the results of Pronzato [Pronzato, L., 2013. A delimitation of the support of optimal designs for Kiefer's $\phi_p$-class of criteria. Statistics \& Probability Letters 83, 2721--2728], who studied the same problem for analytically simpler criteria of design optimality. 
\end{abstract}

\section{Introduction}

Consider the problem of optimal experimental design (e.g., \cite{Pazman86}, \cite{puk}, \cite{AtkinsonEA07}, \cite{FedorovLeonov}). Let $\fX$ be a finite design space of size $n$ and let the information of a trial under experimental conditions $\x \in \fX$ (in design point $\x$) be expressed by some matrix $\bH(\x) \in \cS^m_+$, the so called elementary information matrix for $x$. The symbol $\cS^m_+$ denotes the set of all $m\times m$ nonnegative definite matrices.  Let $\Xi$ be the set of all approximate designs (i.e., probability measures) on $\fX$; and for any $\xi \in \Xi$, its information matrix is
\begin{equation}\label{eInfMat}
	\M(\xi) = \sum_{\x \in \fX} \xi(\x) \bH(\x).
\end{equation}
Definition \eqref{eInfMat} covers, for instance, the information matrix in the linear regression model $y_i = \f^\T(\x_i)\theta + \varepsilon_i$, $i=1,\ldots,n$, where $\theta \in \R^m$ is the vector of unknown parameters, $\f:\fX \to \R^m$ is the regression function, and the design points $\x_i$ belong to $\fX$. In such a case, the elementary information matrix for $\x \in \fX$ is $\bH(\x)=\f(\x)\f^\T(\x)$. However, this general approach also covers, e.g., the linear regression models with multiple responses in each trial, and it can also be used for optimal augmentation of existing designs as demonstrated in Section 6 of \cite{HarmanTrnovska}. The general form of the problem \eqref{eInfMat} can also be utilized for the construction of constrained optimal designs, see the discussion and references in \cite{Harman14}.

The main result of this paper (Theorem \ref{tMain}) holds also for uncountable compact $\fX$, only $\Xi$ becomes the set of all finitely supported discrete measures on $\fX$, and the sum in \eqref{eInfMat} goes only through $\x$, such that $\xi(\x)>0$. However, for the clarity of the presentation, we work with the discrete case.

For an information function $\Phi:\cS^{m}_+ \to \R$ (see \cite{puk}), a design that maximizes $\Phi(\M(\xi))$ is said to be $\Phi$-optimal. The symbol $\cS^m_+$ denotes the set of all $m\times m$ nonnegative definite matrices. A common class of information functions are the so called Kiefer's $\Phi_p$-optimality criteria for $p \in [-\infty,0]$ (e.g., see \cite{puk}, Chapter 6):
$$
\Phi_p(\M)=
\begin{cases}
\; \Big(\frac{1}{m} \tr(\M^p) \Big)^{1/p}, & p \in (-\infty, 0), \\
\; ( \det(\M) )^{1/m}, & p=0, \\
\; \lambda_1(\M), & p=-\infty,
\end{cases}
$$
for nonsingular $\M$, where $\lambda_1(\M) \leq \ldots \leq \lambda_m(\M)$ are the eigenvalues of $\M$, and $\Phi_p(\M)=0$ if $\M$ is singular.
These criteria include the prominent $D$-, $A$- and $E$-optimality ($p=0,-1,-\infty$, respectively). For simplicity, we assume that there exists a design $\xi$, such that $\M(\xi)$ is nonsingular. It follows that any $\Phi_p$-optimal design is nonsingular (i.e., such that its information matrix is nonsingular).

The performance of algorithms for computing $\Phi$-optimal designs can be improved by iteratively reducing the size of $\fX$. For some criteria $\Phi$ it was shown that any nonsingular design $\xi \in \Xi$ (e.g., $\xi^{(k)}$ obtained in the $k$th iteration of an algorithm) can be used to construct an inequality that must be satisfied by any design point supporting the $\Phi$-optimal design. Therefore, design points not satisfying this inequality can be removed from $\fX$. Generally, the closer the design $\xi$ is to the $\Phi$-optimal design, the more design points can be deleted. Early works pioneering this method for $D$-optimality were \cite{Harman03} and \cite{Pronzato03}. \cite{HarmanPronzato} provided the currently best `deletion method' for $D$-optimality. \cite{Pronzato13} formulated a method for removing design points for $\Phi_p$-optimality criteria, $p\in(-\infty,0)$; thus covering A-optimality ($p=-1$). In this paper, we seek to cover $E$-optimality ($p=-\infty$).

$E$-optimality possesses natural statistical interpretations (it protects against the worst variance of a linear function $\z^\T\widehat{\theta}$ over all $\lVert \z \rVert =1$, cf. \cite[Section 6.4]{puk}; and it minimizes the length of the largest principal axis of the confidence ellipsoid for $\theta$, cf. \cite[Section 2.2.1]{FedorovLeonov}), and is one of the most important optimality criteria (\cite[Section 6.1]{puk}, \cite[Section 10.1]{AtkinsonEA07}). Nevertheless, there is a smaller number of results on $E$-optimality, compared to the $A$ or $D$ criteria. This follows in part from the analytical and computational difficulties in dealing with $E$-optimality: unlike other $\Phi_p$-criteria, it is neither strictly concave nor differentiable. For example, there are only a few algorithms for calculating $E$-optimal designs. Currently, the $E$-optimal designs are usually computed by semidefinite programming (SDP) methods (\cite{VanderbergheBoyd}), by the cutting plane method (\cite{PronzatoPazman}, Section 9.5) or by some general-purpose algorithms of non-differentiable optimization. However, these methods can be efficiently applied only for relatively small sizes of the design space. 
The lack of strict concavity implies that the $E$-optimal information matrix (the information matrix of an $E$-optimal design) is generally not unique. The lack of differentiability makes most optimal design algorithms inapplicable for $E$-optimality, which is a known issue with maximin criteria, see \cite{MandalEA}. Note that $E$-optimality can be viewed as a maximin criterion, because it can be expressed as $\Phi_{-\infty}(\M)=\lambda_1(\M) = \min_{\lVert \bu \rVert = 1} \bu^\T \M \bu$. Because of its importance, there is still a sizeable amount of theoretical results on $E$-optimality, e.g., \cite{PukStudden}, \cite{DetteStudden}, \cite{DetteEA06}, \cite{DetteGrigoriev}.

The size of the design space $\fX$ and the (fixed) number of model parameters determine the dimensionality of the optimization problem. The difficulties with computing $E$-optimal designs make reducing the complexity of the optimization problem by removing unnecessary design points especially useful. Indeed, as will be shown in Section \ref{sEx}, the proposed deletion method allows for applying the known algorithms for $E$-optimality on a larger class of problems. However, the lack of differentiability and strict concavity also means that the deletion method for $E$-optimality requires special attention, as noted by \cite{Pronzato13}. These characteristics of $E$-optimality also lead to a slightly more complicated and less powerful deletion method compared to those for other $\Phi_p$-criteria.

If $\bH(\x)=\f(\x)\f^\T(\x)$, a deletion method for general optimality criterion $\Phi$ is based on the Elfving set $\cE=\mathrm{conv}(\{\f(\x)\}_{\x \in \fX} \cup \{-\f(\x)\}_{\x \in \fX})$ (cf. Theorem 8.5 by \cite{puk}): design points that are not extreme points of $\cE$ can be removed for any information function $\Phi$ without losing any $\Phi$-optimal designs. One may determine if a given $x \in \fX$ can be deleted by checking feasibility of the linear program:
$$\begin{aligned}
&\min_{\alpha \in \R^{n-1}, \beta \in \R^n} (\alpha^\T,\beta^\T)\bc \\
\text{s.t. } &\f(\x) = \sum_{\y \neq \x} \f(y)\alpha_\y - \sum_{\y \in \fX}\f(\y)\beta_\y \\
&\alpha \geq \0_{n-1},\,\, \beta \geq \0_n,\,\,\sum_\y \alpha_\y + \sum_y \beta_\y = 1.
\end{aligned}$$
for arbitrary $\bc \in \R^{2n-1}$, assuming that $\f(y) \neq \f(x)$ for all $y\in\fX$, $y \neq x$.
Note that the deletion method based on Elfving set does not depend on a given design (e.g., it should be performed before an algorithm is run as there is no benefit in using it during the iteration process), and it is rather slow -- to apply this deletion method, one needs to perform feasibility checks for $n$ linear programs.

\section{Necessary condition for support points}

The provided method, as well as those of \cite{HarmanPronzato} and \cite{Pronzato13}, relies on the Equivalence theorem (see \cite{puk}, Chapter 7).
The subgradients of $\Phi_{-\infty}(\M)$ in a nonsingular matrix $\M$ are of the form $\sum_{i=1}^k \alpha_i \bu_i \bu_i^\T$, where $\bu_1, \ldots, \bu_k$ are orthonormal eigenvectors corresponding to $\lambda_1(\M)$ and $\alpha_1,\ldots,\alpha_k$ are some nonnegative weights that sum to 1. Hence, the Equivalence theorem for $E$-optimality on a set of information matrices $\cM$ becomes (see, e.g., \cite{puk}, Theorem 7.21):
\begin{lemma}\label{tET0}
	Let $\xi \in \Xi$, such that $\M(\xi) \in \cM$ is nonsingular. Then $\M(\xi)$ is $E$-optimal in $\cM$ if and only if there exists a nonnegative definite $m \times m$ matrix $\E$ with $\tr(\E)=1$ such that $\tr(\A\E) \leq \lambda_1(\M(\xi))$ for all $\A \in \cM$. In the case of optimality, $\tr(\A\E) = \lambda_1(\M(\xi))$ for any $\A \in \cM$ that is $E$-optimal.
\end{lemma}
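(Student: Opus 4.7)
The plan is to apply the first-order characterization of a maximizer of a concave function over a convex set, using the explicit description of the subgradients of $\Phi_{-\infty}=\lambda_1$ at a nonsingular matrix that was recalled immediately before the statement. The set $\cM$ is convex (being the image of the convex $\Xi$ under the linear map $\xi \mapsto \M(\xi)$), and $\lambda_1$ is concave on $\cS^m_+$, so this framework applies.

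For sufficiency, I would take any candidate $\E$ satisfying the hypotheses and use its spectral decomposition $\E = \sum_i \mu_i \bv_i \bv_i^\T$ with $\mu_i \geq 0$, $\sum_i \mu_i = \tr(\E) = 1$, and unit eigenvectors $\bv_i$. The Rayleigh bound then yields $\tr(\A\E) = \sum_i \mu_i \bv_i^\T \A \bv_i \geq \sum_i \mu_i \lambda_1(\A) = \lambda_1(\A)$ for every $\A \in \cS^m_+$. Combined with the hypothesis $\tr(\A\E) \leq \lambda_1(\M(\xi))$, this gives $\lambda_1(\A) \leq \lambda_1(\M(\xi))$ for all $\A \in \cM$, which is precisely $E$-optimality.

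For necessity, I would use the maximality of $\M(\xi)$ to select a subgradient $\E = \sum_{i=1}^k \alpha_i \bu_i \bu_i^\T$ of the form described in the excerpt satisfying $\tr(\E(\A - \M(\xi))) \leq 0$ for every $\A \in \cM$. Such an $\E$ is nonnegative definite with $\tr(\E)=1$; moreover, since each $\bu_i$ is a unit eigenvector of $\M(\xi)$ for the smallest eigenvalue, $\tr(\E\M(\xi)) = \sum_i \alpha_i \bu_i^\T \M(\xi) \bu_i = \lambda_1(\M(\xi))$, so the first-order inequality rearranges to $\tr(\A\E) \leq \lambda_1(\M(\xi))$. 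The equality clause is then immediate: for any $E$-optimal $\A \in \cM$, the chain $\lambda_1(\M(\xi)) = \lambda_1(\A) \leq \tr(\A\E) \leq \lambda_1(\M(\xi))$, with the middle inequality supplied by the sufficiency computation, forces equality throughout.

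The main obstacle is the first step of the necessity direction, namely guaranteeing the existence of an optimality-certifying subgradient, since the nondifferentiability of $\lambda_1$ rules out simply reading off a gradient. I would either cite the general equivalence theorem for concave information functions (\cite[Theorem 7.21]{puk}), or else derive the needed statement from a separating-hyperplane argument between $\cM$ and the convex set $\{\A \in \cS^m_+ : \lambda_1(\A) > \lambda_1(\M(\xi))\}$, combined with the explicit description of the subdifferential already recalled in the excerpt.
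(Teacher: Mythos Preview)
Your proposal is correct. Note, however, that the paper does not actually supply its own proof of this lemma: it is stated as a known result with a reference to \cite[Theorem 7.21]{puk}, and the sentence following it merely records the explicit form of $\E$. Your argument follows the standard route through the subgradient description of $\lambda_1$ (recalled just before the lemma) combined with the first-order optimality condition for maximizing a concave function over a convex set; this is exactly the machinery underlying Pukelsheim's proof, and you rightly identify that citation as the cleanest way to secure existence of the certifying subgradient in the necessity direction. The sufficiency and equality arguments via the Rayleigh inequality $\tr(\A\E)\geq\lambda_1(\A)$ are clean and self-contained.
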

In fact, the matrix $\E$ is given by $\sum_{i=1}^k \alpha_i \bu_i \bu_i^\T$ for some weights $\alpha_i$ and eigenvectors $\bu_i$ as described in the previous paragraph.

The Equivalence theorem can be slightly adapted for
$\cM=\{\sum_{\x \in \fX} \xi(\x) \bH(\x) \ \vert \ \xi \in \Xi\}$ considered in this paper.

\begin{corollary}\label{cET}
	Let $\xi \in \Xi$ and let $\M(\xi)$ be nonsingular. Then $\xi$ is $E$-optimal if and only if there exists a nonnegative definite $m \times m$ matrix $\E$ with $\tr(\E)=1$ such that $\tr(\bH(\x)\E) \leq \lambda_1(\M(\xi))$ for all $\x \in \fX$. In the case of optimality, $\tr(\bH(\x^*)\E) = \lambda_1(\M(\xi))$ for any $\x^*$ that supports any $E$-optimal design.
\end{corollary}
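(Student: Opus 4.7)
The plan is to deduce the corollary directly from Lemma \ref{tET0} applied with $\cM = \{\M(\zeta) : \zeta \in \Xi\}$, exploiting the fact that every element of $\cM$ is a convex combination of the elementary information matrices $\{\bH(\x)\}_{\x \in \fX}$. The only subtle point is the linearization of the dual inequality over $\cM$ into finitely many pointwise inequalities.

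First I would verify the main equivalence: for a fixed nonnegative definite $\E$ with $\tr(\E)=1$, the inequality $\tr(\A\E) \leq \lambda_1(\M(\xi))$ holds for every $\A \in \cM$ if and only if $\tr(\bH(\x)\E) \leq \lambda_1(\M(\xi))$ holds for every $\x \in \fX$. The forward direction is immediate because the one-point design $\delta_\x$ lies in $\Xi$, so $\bH(\x) = \M(\delta_\x) \in \cM$. The converse is linear: any $\A \in \cM$ can be written as $\sum_\x \zeta(\x)\bH(\x)$ for some $\zeta \in \Xi$, so $\tr(\A\E)$ is a convex combination of the numbers $\tr(\bH(\x)\E)$, each bounded by $\lambda_1(\M(\xi))$. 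Combined with Lemma \ref{tET0}, this yields the equivalence between $E$-optimality of $\xi$ and the stated condition.

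For the final assertion about support points, I would take any $E$-optimal $\xi^*$ and any $\x^* \in \fX$ with $\xi^*(\x^*) > 0$. Since $\M(\xi^*)$ is $E$-optimal in $\cM$, Lemma \ref{tET0} gives $\tr(\M(\xi^*)\E) = \lambda_1(\M(\xi))$, which expands to $\sum_\x \xi^*(\x)\tr(\bH(\x)\E) = \lambda_1(\M(\xi))$. Because each summand is at most $\lambda_1(\M(\xi))$ and the convex combination attains this upper bound, equality $\tr(\bH(\x)\E) = \lambda_1(\M(\xi))$ must hold at every $\x$ of positive weight, in particular at $\x^*$. There is no substantive obstacle here: the corollary is essentially a finite-dimensional reformulation of the abstract Equivalence theorem, and the only care needed is the observation that the Dirac measures $\delta_\x$ are admissible members of $\Xi$, which bridges between statements over $\cM$ and statements indexed by $\fX$.
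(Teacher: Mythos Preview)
Your proposal is correct and follows essentially the same route as the paper's proof: linearize the condition in Lemma~\ref{tET0} over $\cM$ into pointwise inequalities using that each $\A\in\cM$ is a convex combination of the $\bH(\x)$, and for the support-point equality expand $\tr(\M(\xi^*)\E)=\lambda_1(\M(\xi))$ and observe that a convex combination attaining its upper bound forces equality at every support point. The only extra detail you spell out is the use of the Dirac designs $\delta_\x$ to get $\bH(\x)\in\cM$, which the paper leaves implicit.
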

\begin{proof}
	Let $\cM=\{\sum_{\x \in \fX} \xi(\x) \bH(\x) \ \vert \ \xi \in \Xi\}$. To prove the first part, it suffices to observe that if $\tr(\bH(\x)\E) \leq \lambda_1(\M(\xi))$ for all $\x \in \fX$, we also have $\tr(\A\E) = \sum_{\x} \tilde{\xi}(\x) \tr(\bH(\x) \E) \leq \lambda_1(\M(\xi))$ for any $\A = \M(\tilde{\xi}) \in \cM$.
	
	Suppose that $\xi$ and $\xi^*$ are $E$-optimal, and that $\xi$ satisfies Lemma \ref{tET0} with $\E$. Then $\tr(\M(\xi^*)\E) = \lambda_1(\M(\xi))$ and $\tr(\bH(\x)\E) \leq \lambda_1(\M(\xi))$ for any $\x$. Then 
	$$\tr(\M(\xi^*)\E) = \sum_{\xi^*(\x)>0} \xi^*(\x) \tr(\bH(\x) \E) \leq \lambda_1(\M(\xi)).$$
	To obtain equality in the last inequality, $\tr(\bH(\x) \E)=\lambda_1(\M(\xi))$ must be satisfied for any $\x$ supporting $\xi^*$.
\end{proof}

The main result of this paper follows.

\begin{theorem}\label{tMain}
	Let $\xi \in \Xi$ be a design with a nonsingular information matrix $\M$ and let $\lambda_1=\lambda_1(\M)$. Take any number, say $s$, of normalized eigenvectors $\bv_1, \ldots, \bv_s$ of $\M$ and any $\alpha_1, \ldots, \alpha_s$, such that $\alpha_i \geq 0$, $\sum_i \alpha_i = 1$, and set 
	$$\Z=\sum_{i=1}^s \alpha_i \bv_i \bv_i^\T \quad\text{and}\quad h=\max_{\x\in\fX} \tr(\bH(\x)\Z).$$
	Then $h \geq \lambda_1$. If $h= \lambda_1$, then $\xi$ is $E$-optimal. If $h>\lambda_1$, then
	$$
	g_h(\x,y) := \sum_{i=1}^m \frac{\bu_i^\T\bH(\x)\bu_i}{(\lambda_i(\M) - h)y + \lambda_1} \geq 1
	$$
	for any $\x$ supporting an $E$-optimal design and for any $y \in [0, \lambda_1 / (h-\lambda_1))$, where $\bu_1, \ldots, \bu_m$ are the orthonormal eigenvectors corresponding to $\lambda_1(\M) \leq \ldots \leq \lambda_m(\M)$.
\end{theorem}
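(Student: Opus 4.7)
My plan is to address the three assertions of the theorem in sequence.

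For $h \geq \lambda_1$, the key observation is that each unit eigenvector $\bv_i$ of $\M$ satisfies $\bv_i^\T\M\bv_i \geq \lambda_1$, so $\tr(\M\Z) = \sum_i \alpha_i\bv_i^\T\M\bv_i \geq \lambda_1$; on the other hand $\tr(\M\Z) = \sum_\x \xi(\x)\tr(\bH(\x)\Z) \leq h$. If $h = \lambda_1$, then $\Z$ is non-negative definite with $\tr(\Z) = 1$ and $\tr(\bH(\x)\Z) \leq \lambda_1(\M)$ for every $\x$, so Corollary~\ref{cET} with $\E = \Z$ certifies that $\xi$ is $E$-optimal.

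For $h > \lambda_1$, I would introduce $\mathbf{D}_y := (\lambda_1\I + y(\M - h\I))^{-1}$, positive definite on the stated range of $y$, and verify $g_h(\x,y) = \tr(\bH(\x)\mathbf{D}_y)$ by diagonalizing in the $(\bu_i)$-basis. Fix any $E$-optimal $\xi^*$, set $\lambda^* = \lambda_1(\M(\xi^*))$, and let $\E^*$ be its Corollary~\ref{cET} certificate, so that $\tr(\bH(\x^*)\E^*) = \lambda^*$ for every $\x^*$ supporting $\xi^*$ and $\tr(\bH(\x)\E^*) \leq \lambda^*$ for every $\x$. Rerunning the first paragraph's estimate with $\M(\xi^*)$ in place of $\M$ gives $\lambda^* \leq h$, and averaging the dual feasibility of $\E^*$ against $\xi$ gives $\tr(\M\E^*) \leq \lambda^*$. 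From these,
\[
\tr(\E^*\mathbf{D}_y^{-1}) = (\lambda_1 - yh) + y\,\tr(\M\E^*) \leq \lambda_1 + y(\lambda^* - h) =: c,
\]
and a short arithmetic check using $\lambda_1 \leq \lambda^* \leq h$ and $y \in [0,\lambda_1/(h-\lambda_1))$ shows $c \in (0,\lambda_1]$, so $\lambda^*/c \geq 1$.

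The step I expect to demand the most care is the trace inequality
\[
\tr(\bH\mathbf{D})\,\tr(\E\mathbf{D}^{-1}) \geq \tr(\bH\E)
\]
for arbitrary non-negative definite $\bH,\E$ and positive definite $\mathbf{D}$. The clean proof proceeds by spectrally writing $\bH = \sum_k \eta_k v_kv_k^\T$ and $\E = \sum_l \theta_l w_lw_l^\T$ with $\eta_k,\theta_l \geq 0$, applying the scalar Cauchy--Schwarz $(v_k^\T w_l)^2 \leq (v_k^\T\mathbf{D}v_k)(w_l^\T\mathbf{D}^{-1}w_l)$ (obtained from the pair $\mathbf{D}^{1/2}v_k,\,\mathbf{D}^{-1/2}w_l$), multiplying by $\eta_k\theta_l$, and summing over $(k,l)$; the single sums factor as $\tr(\bH\mathbf{D})$ and $\tr(\E\mathbf{D}^{-1})$, while the weighted double sum of $(v_k^\T w_l)^2$ equals $\tr(\bH\E)$. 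The more naive attempt of applying Cauchy--Schwarz directly to $\mathbf{D}^{1/2}\bH^{1/2}$ and $\mathbf{D}^{-1/2}\E^{1/2}$ produces only $(\tr(\bH^{1/2}\E^{1/2}))^2$ on the right, which can fall strictly below $\tr(\bH\E)$ and is therefore insufficient. Applied to $\bH(\x^*)$, $\E^*$, and $\mathbf{D}_y$, the correct inequality gives $g_h(\x^*,y)\,\tr(\E^*\mathbf{D}_y^{-1}) \geq \tr(\bH(\x^*)\E^*) = \lambda^*$; combining with $\tr(\E^*\mathbf{D}_y^{-1}) \leq c$ delivers the desired $g_h(\x^*,y) \geq \lambda^*/c \geq 1$.
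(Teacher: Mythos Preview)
Your proof is correct and follows the same architecture as the paper's: the same auxiliary matrix $\mathbf{D}_y^{-1}=y\M+(\lambda_1-hy)\I$ (denoted $\K$ there), the same use of the dual certificate $\E^*$ from Corollary~\ref{cET}, and the same pivotal inequality $\tr(\bH\E)\le\tr(\bH\mathbf{D}_y)\,\tr(\E\mathbf{D}_y^{-1})$. The only difference is in how that inequality is obtained: the paper gets it in one line from sub-multiplicativity of the Frobenius norm, $\tr(\bH\E)=\lVert \bH^{1/2}\mathbf{D}_y^{1/2}\cdot\mathbf{D}_y^{-1/2}\E^{1/2}\rVert_F^2\le\lVert \bH^{1/2}\mathbf{D}_y^{1/2}\rVert_F^2\,\lVert \mathbf{D}_y^{-1/2}\E^{1/2}\rVert_F^2$, which is a third route distinct from both the spectral-decomposition argument you chose and the ``naive'' Frobenius Cauchy--Schwarz you correctly rejected.
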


\begin{proof}
First, observe that
\begin{equation}\label{eTrIneq}
	\max_{\x} \tr(\bH(\x)\A) \geq \sum_{\x}\tilde{\xi}(\x)\tr(\bH(\x)\A) = \tr(\M(\tilde{\xi})\A)
\end{equation}
for any $\A \in \cS^m_+$ and any information matrix $\M(\tilde{\xi})$, $\tilde{\xi} \in \Xi$.
Therefore,
$$
h=\max_{\x} \tr(\bH(\x) \Z) \geq \tr(\M\Z) = \sum_{i=1}^s \alpha_i \bv_i^\T \M \bv_i \geq \lambda_1.
$$
Moreover, $\tr(\Z)=1$. Let $h=\lambda_1$. Then the inequalities become equalities; in particular, $\max_\x \tr(\bH(\x) \Z) = \lambda_1$. Therefore, Corollary \ref{cET} yields that $\xi$ is $E$-optimal with $\E=\Z$.

Now, let $h>\lambda_1$ and let $\x^* \in \fX$ support an $E$-optimal design $\xi^*$. Let us denote $\M^* = \M(\xi^*)$ and $\bH_*=\bH(\x^*)$. Then there exists $\E$, such that $\tr(\E)=1$, $\lambda_1 \geq \tr(\M\E)$ and $\lambda_1(\M^*) = \tr(\bH_*\E)$. It follows that, using \ref{eTrIneq},
$$
h=\max_\x \tr(\bH(\x)\Z) \geq \tr(\M^*\Z) = \sum_{i=1}^s \alpha_i \bv_i^\T \M^* \bv_i \geq \lambda_1(\M^*) \geq \tr(\M\E).
$$
Suppose that $0 \leq y < \lambda_1/(h-\lambda_1)$ and let $\K= y\M + (\lambda_1-hy)\I$, where $\I$ denotes the identity matrix. Then, $\K$ is positive definite and
$$
\tr((\K-\bH_*)\E) = y \tr(\M\E) + (\lambda_1-hy)\tr(\E) - \tr(\bH_*\E) \leq yh + (\lambda_1-hy) - \lambda_1 = 0,
$$
because $\lambda_1 \leq \lambda_1(\M^*) \leq \tr(\bH_*\E)$. Thus, 
\begin{equation}\begin{aligned}\label{eKE}
		\tr(\K\E) 
		&\leq \tr( \bH_*\E) 
		= \Vert \bH^{1/2}_* \K^{-1/2} \K^{1/2} \E^{1/2} \Vert^2 
		\leq   \Vert \bH^{1/2}_* \K^{-1/2}  \Vert^2 \cdot \Vert  \K^{1/2} \E^{1/2} \Vert^2 \\
		&= \tr(\bH_*\K^{-1}) \tr(\K \E),
	\end{aligned}\end{equation}
	because the Frobenius norm is sub-multiplicative. The constraints on $y$ and the inequality $\lambda_1 \geq \tr(\M\E)$ guarantee that $\tr(\K \E) > 0$; hence \eqref{eKE} yields $\tr(\bH_*\K^{-1})\geq 1$. The spectral decomposition $\K^{-1} = \sum_{i=1}^m ((\lambda_i(\M) - h)y + \lambda_1)^{-1} \bu_i \bu_i^\T$ then gives $g_h(\x^*,y)\geq 1$.
\end{proof}

Therefore, using any nonsingular design $\xi$, one can remove all design points $\x$ that satisfy $g_h(\x,y)<1$ for some $h$, $y$ given by Theorem \ref{tMain}. For the usual case of the linear regression, where $\bH(\x)=\f(\x)\f^\T(\x)$, we have $h=\max_{\x} \f^\T(\x) \Z \f(\x)$ and
$$
g_h(\x,y) = \sum_{i=1}^m \frac{(\bu_i^\T\f(\x))^2}{(\lambda_i(\M) - h)y + \lambda_1}.
$$

In the following, we assume that we have a design $\xi$ with nonsingular $\M=\M(\xi)$ and $\lambda_1=\lambda_1(\M)$, and that $h>\lambda_1$. One should try to make the values of $g_h(\x,y)$ as low as possible, so that more $\x$'s can be deleted. Fortunately, minimizing $g$ with respect to $y$ is a (one-dimensional) convex problem. Hence, the calculation of optimal $y$ is very fast.

\begin{lemma}
	The function $g_h(\x,y)$ is convex in $y$ on $[0, \lambda_1 / (h-\lambda_1))$.
\end{lemma}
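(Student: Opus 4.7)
The plan is to prove convexity termwise: show that every summand in the definition of $g_h(\x,y)$ is a convex function of $y$ on the prescribed interval, and then invoke the fact that a nonnegative combination of convex functions is convex.

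The first step is to verify that every denominator $(\lambda_i(\M)-h)y+\lambda_1$ is strictly positive on $[0,\lambda_1/(h-\lambda_1))$. If $\lambda_i(\M)\geq h$, the denominator is at least $\lambda_1>0$. If $\lambda_i(\M)<h$, then since $\lambda_i(\M)\geq\lambda_1$ we have $h-\lambda_i(\M)\leq h-\lambda_1$, so
$$
(\lambda_i(\M)-h)y+\lambda_1 \geq (\lambda_1-h)y+\lambda_1 > 0
$$
by the upper bound on $y$. Thus each summand is well-defined and finite.

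The second step is to compute the second derivative in $y$ of a generic summand $a/(by+c)$, where $a=\bu_i^\T\bH(\x)\bu_i$, $b=\lambda_i(\M)-h$, and $c=\lambda_1$. A direct computation gives
$$
\frac{d^2}{dy^2}\,\frac{a}{by+c} = \frac{2ab^2}{(by+c)^3}.
$$
Since $\bH(\x)\in\cS^m_+$, the coefficient $a=\bu_i^\T\bH(\x)\bu_i$ is nonnegative; $b^2\geq 0$ trivially; and $(by+c)^3>0$ by the first step. Hence the second derivative of each summand is nonnegative on the interval, so each summand is convex in $y$.

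The final step is to sum: $g_h(\x,\cdot)$ is a sum of convex functions and is therefore convex on $[0,\lambda_1/(h-\lambda_1))$. There is no real obstacle here; the only point that needs a moment of care is the positivity of the denominators, which relies on the ordering $\lambda_1\leq\lambda_i(\M)$ and on the stated range of $y$.
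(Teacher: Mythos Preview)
Your proof is correct and follows essentially the same approach as the paper, which simply notes that the lemma follows by computing the second derivative $\partial^2 g_h(\x,y)/\partial y^2$. Your termwise verification of the sign of this second derivative, together with the check that each denominator stays positive on the interval, fills in the details the paper leaves implicit.
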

\begin{proof}
	The lemma can easily be proved by calculating the second derivative $\partial^2 g_h(\x,y) /\partial y^2$.
\end{proof}

If the derivative in $y=0$, which is
$$\left.\frac{\partial g_h(\x,y)}{\partial y}\right\vert_{y=0} = \sum_{i=1}^m \frac{\bu_i^\T\bH(\x)\bu_i}{ \lambda_1^2}(h-\lambda_i(\M)) ,$$
is not less than 0, then the optimal $y$ is $y=0$. This is equivalent to
$$
0\leq\sum_{i=1}^m (h-\lambda_i(\M))\tr(\bH(\x) \bu_i\bu_i^\T) 
=h \tr(\bH(\x)) - \tr(\bH(\x)\M).
$$
For example, if $h\geq\lambda_m(\M)$, then $\tr(\bH(\x)\M) \leq \lambda_m(\M) \tr(\bH(\x)) < h \tr(\bH(\x))$ for any $\x \in \fX$, and we always set $y=0$. Otherwise, we seek $y \in [0,\lambda_1 / (h - \lambda_1))$ such that $\partial g_h(\x,y)/\partial y=0.$ This is in fact the problem of finding a root $y$ on $[0,\lambda_1 / (h - \lambda_1))$ of a polynomial of degree at most $2m$.

The following lemma shows that the minimization of $g_h$ implies choosing also $h$ as low as possible. First, denote $G(\x,h):= \min_y g_h(\x,y)$ over $y \in [0, \lambda_1 / (h-\lambda_1))$.

\begin{lemma}
	If $h_1 < h_2$, then $G(\x,h_1) \leq G(\x,h_2)$ for any $\x \in \fX$.
\end{lemma}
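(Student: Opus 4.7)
My plan is to reduce the claim to two independent monotonicity facts: one about the admissible interval for $y$, and one about $g_h(\x,y)$ as a function of $h$ at a fixed $y$.

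First I would record how the feasible interval $D_h := [0,\lambda_1/(h-\lambda_1))$ behaves in $h$. Since $h_1<h_2$ and both exceed $\lambda_1$, we have $h_2-\lambda_1>h_1-\lambda_1>0$, so $\lambda_1/(h_2-\lambda_1)<\lambda_1/(h_1-\lambda_1)$, giving the inclusion $D_{h_2}\subseteq D_{h_1}$. Consequently the minimization defining $G(\x,h_2)$ is carried out over a subset of the domain used for $G(\x,h_1)$, so replacing the minimization domain for $g_{h_1}$ by the smaller $D_{h_2}$ can only increase its value.

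Next I would verify that for each fixed $y\ge 0$ in $D_{h_2}\subseteq D_{h_1}$, the map $h\mapsto g_h(\x,y)$ is nondecreasing. Each summand $\bu_i^\T\bH(\x)\bu_i/((\lambda_i(\M)-h)y+\lambda_1)$ has a nonnegative numerator (as $\bH(\x)\in\cS^m_+$), and the denominator equals $\lambda_1-(h-\lambda_i(\M))y$, which is positive on $D_h$ (this is exactly why the interval $D_h$ was chosen, using $\lambda_1\le\lambda_i(\M)$) and nonincreasing in $h$ for $y\ge 0$. Hence each term, and therefore $g_h(\x,y)$, is nondecreasing in $h$; so for every $y\in D_{h_2}$ we have $g_{h_1}(\x,y)\le g_{h_2}(\x,y)$.

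Combining the two observations gives the chain
\[
G(\x,h_1)=\min_{y\in D_{h_1}} g_{h_1}(\x,y)\le \min_{y\in D_{h_2}} g_{h_1}(\x,y)\le \min_{y\in D_{h_2}} g_{h_2}(\x,y)=G(\x,h_2),
\]
where the first inequality uses $D_{h_2}\subseteq D_{h_1}$ and the second uses the pointwise monotonicity in $h$. There is no real obstacle here; the only thing to be careful about is making sure the denominators stay strictly positive on the relevant interval so that the pointwise comparison is justified, which is precisely guaranteed by restricting to $D_{h_2}$.
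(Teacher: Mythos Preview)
Your argument is correct and follows essentially the same approach as the paper's proof sketch: monotonicity of $g_h(\x,y)$ in $h$ at fixed $y$ (which the paper phrases via the $h$-derivative) together with the nesting of the feasible intervals $D_h$. You are in fact more careful than the paper, spelling out why the denominators stay positive on $D_{h_2}$ so the pointwise comparison is legitimate; note also that the paper's remark that the interval is ``increasing'' in $h$ should really read ``decreasing'', in agreement with your inclusion $D_{h_2}\subseteq D_{h_1}$.
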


\begin{proof}
	The proof is straightforward: by computing the derivative of $g_h(\x,y)$ with respect to $h$ and observing that the set $[0, \lambda_1 / (h-\lambda_1))$ is `increasing' with $h$.
\end{proof}

For a given choice of the eigenvectors $\bv_1, \ldots, \bv_s$, the weights $\alpha=(\alpha_1,\ldots,\alpha_k)^\T$ minimizing $h$ can be obtained by a simple linear program:
\begin{equation}\label{eLP}
	\begin{aligned}
		&\min_{h\in \R, \alpha \in \R^m} h \\
		\text{s.t. } &h\geq \sum_{i=1}^s \alpha_i \bv_i^\T \bH(\x) \bv_i, \quad \x \in \fX \\
		&\sum_{i=1}^m \alpha_i = 1,\,\, \alpha \geq \0_m,
\end{aligned}\end{equation}
where $\0_m$ is the $m\times 1$ vector of zeros. Therefore, the proposed method entails solving one linear program (unlike the deletion method based on the Elfving set, which requires solving $n$ linear programs) and $n$ one-dimensional convex optimizations.

If $\M$ has $m$ distinct eigenvalues, the $m$ normalized eigenvectors are fixed (up to a reflection around origin), so these eigenvectors should be chosen for calculating $h$. However, if some eigenvalue of $\M$ has multiplicity greater than 1, there is freedom in choosing the $\bv_i$'s, but if the minimization \eqref{eLP} is taken also with respect to normalized $\bv_1, \ldots, \bv_s$, the problem becomes nonlinear (and even nonconvex). Therefore, we suggest choosing the set of $m$ orthonormal eigenvectors given by the spectral decomposition of $\M$, with possibly some additional eigenvectors corresponding to $\lambda_1$, if $\lambda_1$ has multiplicity greater than 1. Such a recommendation follows from the fact that $\Z=\sum_i \alpha_i \bv_i \bv_i^\T$ tries to approximate the matrix $\E$ in the Equivalence theorem, which depends on the eigenvectors corresponding to $\lambda_1$. Then, $\alpha$ can be calculated by the linear program \eqref{eLP}.

\section{Example}
\label{sEx}

The $E$-optimality problem is an SDP problem (\cite{VanderbergheBoyd}), which can be solved by the standard solvers like SeDuMi or MOSEK. However, the use of these methods is severely limited by available computer memory, because for a design space of size $\vert \fX \vert = n$, they work with $n \times n$ matrices. For instance, the SDP method can generally calculate $E$-optimal designs for problems of sizes only up to $n\approx10000$ on the computer specified in the next paragraph. Therefore, the proposed deletion method can be used to allow the solvers to deal with larger problems, as demonstrated in Example \ref{exCube}.

All calculations in this section are done in MATLAB on a computer with a 64-bit Windows 8 operating system running an Intel Core i5-4590S CPU processor at 3.00 GHz with 4 GB of RAM; the SDP problems are solved in MATLAB using SeDuMi through the CVX software.  %\footnote{On the given computer, even for problems of sizes approaching this limit, the SDP algorithm became nearly prohibitively slow (taking hours, whereas for $n \approx 8000$ it takes minutes to solve the SDP problem, and for $n<7000$ it is a matter of seconds).}
Throughout, $\bH(\x)=\f(\x)\f^\T(\x)$, in Theorem \ref{tMain} the eigenvectors $\bv_i$ are chosen as the orthonormal set $\bv_1,\ldots,\bv_m$ from the spectral decomposition, $h$ is calculated by the linear program \eqref{eLP} and the $y$'s are calculated by minimizing $g_h(\x,y)$'s.

\captionsetup[subfloat]{captionskip=-10pt}
\begin{figure}[t]
	\centering
	\subfloat[\label{fQuad1}]{\includegraphics[width=0.5\textwidth]{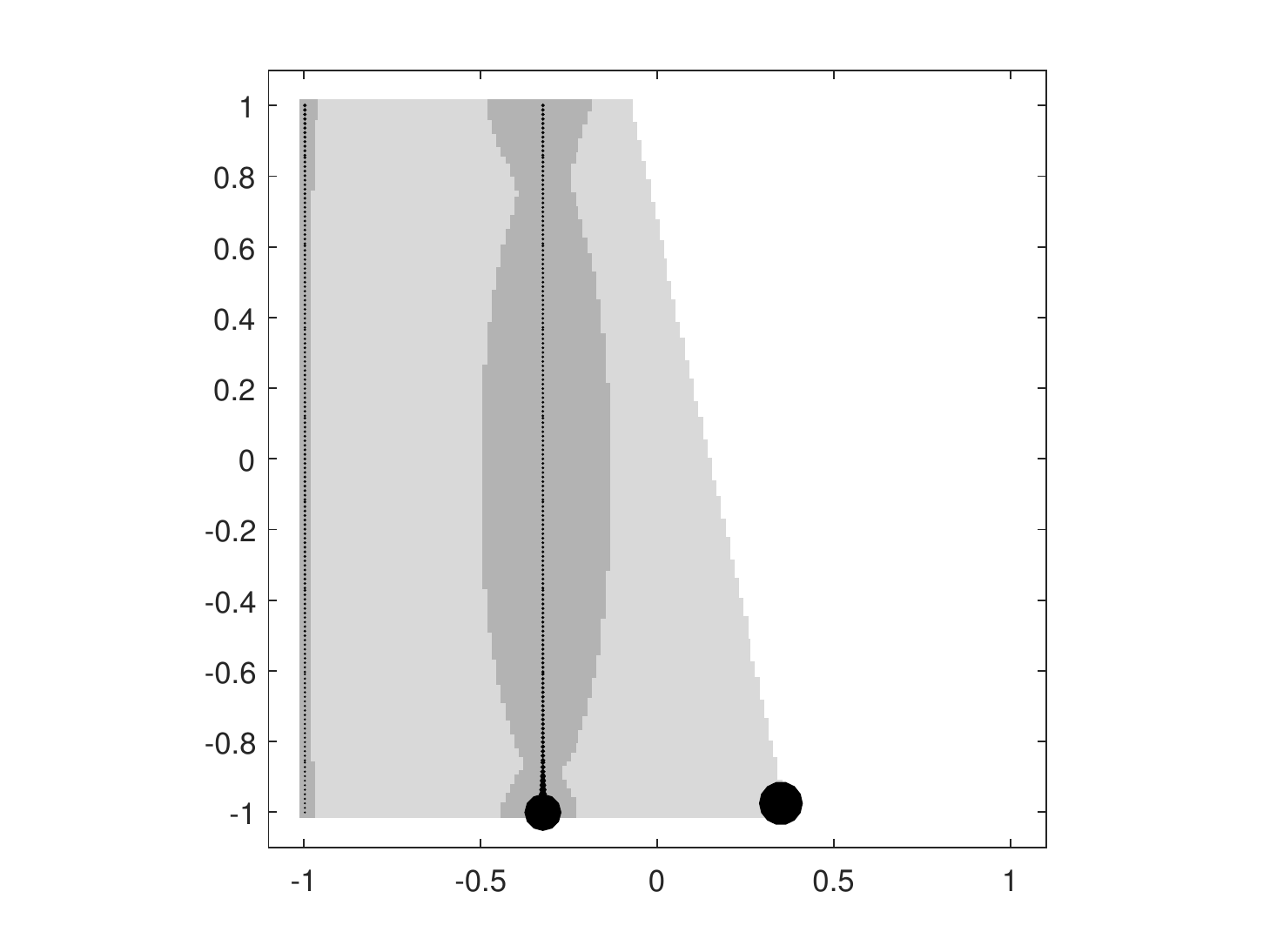}}
	\subfloat[\label{fQuad2}]{\includegraphics[width=0.5\textwidth]{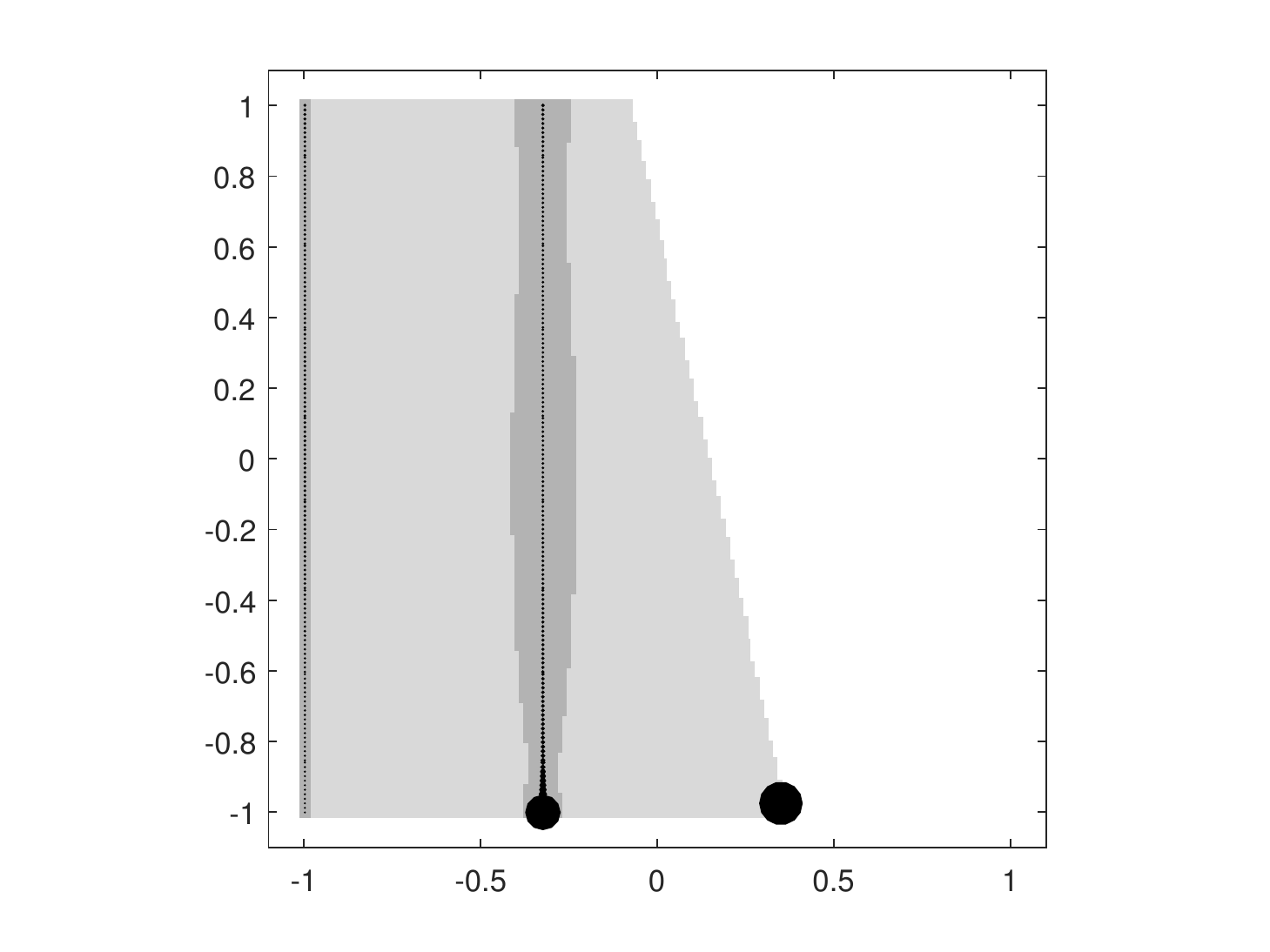}}
	\caption{The deletion method applied for Example \ref{exCube} based on $\tilde{\xi}_1$ (a) and $\tilde{\xi}_2$ (b). The gray area represents $\fX$, i.e., the points $(x_1,x_2)^\T$ that satisfy $x_2 \leq -4.51x_1 + 0.61$. The bright gray region denotes the points that were removed using the proposed method, the points in the dark gray region were not deleted. The black circles represent the values of the obtained $E$-optimal design $\xi^*$, their areas are proportional to the values of $\xi^*$. In each case, the resulting design consists of three sequences of points: for $x_1=-1$, for $x_1=-0.325$ and for $x_1=0.35$. Many of the points in these sequences attain rather small values of $\xi^*$ (smaller than 0.02), but generally they cannot be omitted.}
	\label{fQuad}
\end{figure}
\begin{example}\label{exCube}
	Consider the quadratic regression on the square $[-1,1]^2$ discretized uniformly into $161\times161$ design points. Therefore, the regressors are of the form $\f(x_1,x_2)=(1,x_1,x_2,x_1^2,x_2^2)^\T$ with $x_1,x_2 \in \{\pm k/80 \ \vert\ k=0,\ldots,80\}$. Such models are typically used in the response surface methodology (see, e.g., \cite{MyersEA}). Moreover, suppose that only some combinations of $x_1$ and $x_2$ are allowed in the experiment, expressed by a constraint $x_2 \leq ax_1 + b$; thus, only design points satisfying this constraint belong to $\fX$. For this example, we randomly selected $a=-4.5117$ and $b=0.6091$. To our best knowledge, for the current model on the constrained design space analytical formulas on $E$-optimal designs are not known, unlike for the model on the entire square, which is a rather simple design problem.
	
	Out of the total $n_0=161^2=25921$ original design points, in total $n=14701$ of them satisfy the constraint, and the SDP method runs out of memory while trying to calculate an $E$-optimal design on all $n$ points. However, using the deletion method, this can be remedied. We first calculate an $E$-optimal design $\tilde{\xi}_1$ on a subset $\tilde{\fX}_1$ consisting of $8000$ points chosen at random from $\fX$. Based on $\tilde{\xi}_1$, $n_{\mathrm{del}} =11197$ points (from the entire set $\fX$) are deleted, and on the remaining $n-n_{\mathrm{del}} = 3504$ points an $E$-optimal design can easily be calculated. Theorem \ref{tMain} guarantees that this design is $E$-optimal on the original $14701$-point design space. The deletion results as well as the final $E$-optimal design are illustrated in Figure \ref{fQuad1}.
	
	On the computer specified above, the calculation of $\tilde{\xi}_1$ is performed in around 5 minutes, the deletion method takes approximately 3.3 minutes and the $E$-optimal design on the remaining $3504$ points is calculated in less than 10 seconds.
	%time1 = 302s = 5 mins (E-opt on 8000 points)
	%time2 = 199s = 3 mins (deletion method based on the calculated design)
	%time3 = 2s (E-opt on the remaining 3504 points)
	%time4 = 354s = 6 mins (deletion method based on Aopt)
	%time5 = 1493s = 25 mins (Eopt on the remaining 9498 points)
	%timeElf = 7346s = 2 hours
	
	Instead of selecting a random subset of $\fX$, the removal of unnecessary design points can also be performed based on a design that is $E$-optimal on a less dense grid. For instance, we may halve the density of the discretization by including in $\tilde{\fX}_2$ only $x_1,x_2 \in \{\pm k/40 \ \vert\ k=0,\ldots,50\}$; then $\tilde{\fX}_2$ consists of the $3717$ out of these design points that satisfy the constraint $x_2 \leq a x_1 + b$. Note that $\tilde{\fX}_2$ is indeed a subset of $\fX$. Then, $\tilde{\xi}_2$ that is optimal on $\tilde{\fX}$ is obtained in less than 3 seconds, and based on $\tilde{\xi}$, $12895$ out of the total $14701$ points can be deleted in around 3.5 minutes. An $E$-optimal design on the remaining points (which is also $E$-optimal on the entire $\fX$) can be calculated in less than 2 seconds. The results are illustrated in Figure \ref{fQuad2}.
	
	Note that the deletion method based on the Elfving set takes more than two hours and it does not delete any design points in the current example, although this method generally deletes a nonzero number of points.
\end{example}

The amount of points removed by the proposed method naturally depends on the selected model. Although the deletion method generally allows for solving problems of greater size, the increase in size may be rather small. For instance, in settings identical to Example \ref{exCube}, only with added interaction term (i.e., $\f(x_1,x_2)=(1,x_1,x_2,x_1^2,x_2^2,x_1 x_2)^\T$), the deletion method removes smaller number of points. The approach based on randomly selecting $8000$ points removed $3378$ points from the original $14701$-point $\fX$ for one such random selection. That is not enough to allow one to apply the SDP algorithm to the remaining $11323$ points on the specified computer. However, by using the approach of utilizing the less dense discretization, similarly to Example \ref{exCube}, $5108$ points can be removed. On the remaining $9593$ design points the $E$-optimal design that is also optimal on $\fX$ can be calculated in around 40 minutes. Hence, it seems that considering a slightly less dense discretization for discretized models to delete non-optimal design points may be an efficient approach.

\section*{Acknowledgements}
This work was supported by the Slovak Scientific Grant Agency [grant VEGA 1/0521/16].

\bibliographystyle{plainnat}
\bibliography{rosa.bib}

\begin{thebibliography}{18}
\providecommand{\natexlab}[1]{#1}
\providecommand{\url}[1]{\texttt{#1}}
\expandafter\ifx\csname urlstyle\endcsname\relax
  \providecommand{\doi}[1]{doi: #1}\else
  \providecommand{\doi}{doi: \begingroup \urlstyle{rm}\Url}\fi

\bibitem[Atkinson et~al.(2007)Atkinson, Donev, and Tobias]{AtkinsonEA07}
A.~C. Atkinson, A.~Donev, and R.~Tobias.
\newblock \emph{Optimum experimental designs, with SAS}.
\newblock Oxford University Press, New York, 2007.

\bibitem[Dette and Grigoriev(2014)]{DetteGrigoriev}
H.~Dette and Y.~Grigoriev.
\newblock {E}-optimal designs for second-order response surface models.
\newblock \emph{The Annals of Statistics}, 42:\penalty0 1635--1656, 2014.

\bibitem[Dette and Studden(1993)]{DetteStudden}
H.~Dette and W.~J. Studden.
\newblock Geometry of {E}-optimality.
\newblock \emph{The Annals of Statistics}, 21:\penalty0 416--433, 1993.

\bibitem[Dette et~al.(2006)Dette, Melas, and Pepelyshev]{DetteEA06}
H.~Dette, B.~Melas, and A.~Pepelyshev.
\newblock Local {c}- and {E}-optimal designs for exponential regression models.
\newblock \emph{Annals of the Institute of Statistical Mathematics},
  58:\penalty0 407--426, 2006.

\bibitem[Fedorov and Leonov(2014)]{FedorovLeonov}
V.~V. Fedorov and S.~L. Leonov.
\newblock \emph{Optimal Design for Nonlinear Response Models}.
\newblock CRC Press, Boca Raton, 2014.

\bibitem[Harman(2003)]{Harman03}
R.~Harman.
\newblock A method how to delete points which do not support a {D}-optimal
  design.
\newblock \emph{Tatra Mountains Mathematical Publications}, 26:\penalty0
  59--67, 2003.

\bibitem[Harman(2014)]{Harman14}
R.~Harman.
\newblock Multiplicative methods for computing {D}-optimal stratified designs
  of experiments.
\newblock \emph{Journal of Statistical Planning and Inference}, 146:\penalty0
  82--94, 2014.

\bibitem[Harman and Pronzato(2007)]{HarmanPronzato}
R.~Harman and L.~Pronzato.
\newblock Improvements on removing nonoptimal support points in {D}-optimum
  design algorithms.
\newblock \emph{Statistics \& Probability Letters}, 77:\penalty0 90–94, 2007.

\bibitem[Harman and Trnovsk\'a(2009)]{HarmanTrnovska}
R.~Harman and M.~Trnovsk\'a.
\newblock Approximate {D}-optimal designs of experiments on the convex hull of
  a finite set of information matrices.
\newblock \emph{Mathematica Slovaca}, 59:\penalty0 693--704, 2009.

\bibitem[Mandal et~al.(2015)Mandal, Wong, and Yu]{MandalEA}
A.~Mandal, W.~K. Wong, and Y.~Yu.
\newblock Algorithmic searches for optimal designs.
\newblock In A.~Dean, M.~Morris, J.~Stufken, and Bingham D., editors,
  \emph{Handbook of Design and Analysis of Experiments}. Chapman \& Hall/CRC,
  Boca Raton, 2015.

\bibitem[Myers et~al.(2016)Myers, Montgomery, and Anderson-Cook]{MyersEA}
R.~H. Myers, D.~C. Montgomery, and C.~M. Anderson-Cook.
\newblock \emph{Response surface methodology: process and product optimization
  using designed experiments}, volume~3.
\newblock John Wiley \& Sons, New Jersey, 2016.

\bibitem[P\'azman(1986)]{Pazman86}
A.~P\'azman.
\newblock \emph{Foundation of Optimum Experimental Design}.
\newblock Reidel Publ., Dordrecht, 1986.

\bibitem[Pronzato(2003)]{Pronzato03}
L.~Pronzato.
\newblock Removing non-optimal support points in {D}-optimum design algorithms.
\newblock \emph{Statistics and Probability Letters}, 63:\penalty0 223–228,
  2003.

\bibitem[Pronzato(2013)]{Pronzato13}
L.~Pronzato.
\newblock A delimitation of the support of optimal designs for {K}iefer’s
  {P}hip-class of criteria.
\newblock \emph{Statistics and Probability Letters}, 83:\penalty0 2721–2728,
  2013.

\bibitem[Pronzato and P\'azman(2013)]{PronzatoPazman}
L.~Pronzato and A.~P\'azman.
\newblock \emph{Design of Experiments in Nonlinear Models}.
\newblock Springer, New York, 2013.

\bibitem[Pukelsheim(1993)]{puk}
F.~Pukelsheim.
\newblock \emph{Optimal design of experiments}.
\newblock Wiley, New York, 1993.

\bibitem[Pukelsheim and Studden(1993)]{PukStudden}
F.~Pukelsheim and W.~J. Studden.
\newblock E-optimal designs for polynomial regression.
\newblock \emph{The Annals of Statistics}, 21:\penalty0 402--415, 1993.

\bibitem[Vandenberghe and Boyd(1999)]{VanderbergheBoyd}
L.~Vandenberghe and S~Boyd.
\newblock Applications of semidefinite programming.
\newblock \emph{Applied Numerical Mathematics}, 29:\penalty0 283--299, 1999.

\end{thebibliography}

\end{document}